\newfont{\cyrfnt}{wncyi10 at 11pt}
\newtheorem{thm}{Theorem}[section]
\newtheorem{lemma}[thm]{Lemma}
\theoremstyle{definition}
\newtheorem{remark}[thm]{Remark}
\newcommand{\R}{\mathbb{R}}
\newcommand{\N}{\mathbb{N}}
\newcommand{\UU}{\mathcal{U}}
\newcommand{\II}{\mathcal{I}}
\newcommand{\CC}{\mathcal{C}}
\newcommand{\dist}{\text{dist}}
\renewcommand{\Re}{\mathrm{Re}\,}
\def\rg{\mathop{\mathrm{rg}}}
\begin{document}
\title[A ``typical''  contraction is unitary]
{A ``typical''  contraction is unitary
}

\author{Tanja Eisner}
\address{Mathematisches Institut,
  Universit\"{a}t T\"{u}bingen \newline 
  Auf der Morgenstelle 10, D-70176, T\"{u}bingen, Germany}
\email{talo@fa.uni-tuebingen.de}

\keywords{Unitary operators, contractions, Baire category, $C_0$-semigroups} 
\subjclass[2000]{47A05, 47D06, 37A05}

\begin{abstract}
We show that (for the weak operator topology) the set of unitary operators on a separable
infinite-dimensional Hilbert space is residual in the set of all
contractions. The analogous
result holds for isometries and the strong operator topology as
well. These results are applied to 
the problem of embedding operators into
strongly continuous semigroups.     
\end{abstract}
\maketitle

\section{Introduction}


Unitary operators are the nicest and best understood operators on
Hilbert spaces, 
and there are various results showing that there are ``sufficiently many''
unitary operators among the contractions. For
example, the theory of Foia{\c s}--Sz. Nagy extends every contraction
to a unitary operator, called ``unitary dilation'', on a larger
space. Furthermore, Peller \cite{peller:1981} showed that 
the set of all unitary operators is dense in the set of all
contractions with respect to the weak operator topology (and even for
some stronger operator topology called power-weak (pw) operator topology).
Finally, operator functions are bounded from above by their value on unitary operators, as proved by Nelson \cite{nelson:1961}.

In this note we improve the above density result of Peller and show
that on a (separable infinite-dimensional) Hilbert space the unitary
operators form a residual subset of the set of all contractions
(isometries) with respect to the weak (strong) operator topology, see
Section 1 and 2. (Recall that a set of a Baire space is called residual if its complement is of first category.) In this sense, a ``typical'' contraction or isometry is unitary.

We apply this result to the problem of embedding an operator into a
strongly continuous semigroup, see Section 3. We show that a
``typical'' contraction or isometry on a separable
infinite-dimensional Hilbert space is embeddable. This is an
operator-theoretic counterpart to a recent result from ergodic theory
on embedding a measure-preserving transformation into a flow, see de
la Rue, de Sam Lazaro \cite{delarue/desamlazaro:2003}. In particular, a ``typical'' contraction has roots of all order, extending the analogous result from ergodic theory, see King \cite{king:2000}.    

\vspace{0.1cm}

\section{Isometries}

Let $H$ be an infinite-dimensional separable Hilbert space. 
We denote by $\UU$ the set of all unitary operators and by $\II$ the
set of all isometries on $H$ endowed with the strong operator
topology. We show in this section that $\UU$ is residual in $\II$,
i.e., its complement $\II\setminus \UU$ is of first category. This shows that a ``typical'' isometry is unitary.

The space $\mathcal{I}$ is a complete metric space with respect to the metric  
\begin{equation*}
d(T,S):= \sum_{j=1}^\infty \frac{\|Tx_j -Sx_j\|}{2^j \|x_j\|}\quad \text{for } T,S \in \mathcal{I},
\end{equation*}
where $\{x_j\}_{j=1}^\infty$ is a fixed dense subset of $H\setminus \{0\}$. 
    
The starting point is the following lemma based on the Wold decomposition of an
isometry, see e.g. Eisner, Sereny \cite{eisner/sereny:2008}.
\begin{lemma}\label{lemma:unitary-dense-isometry}
The set $\UU$ of unitary operators is dense in $\II$. 
\end{lemma}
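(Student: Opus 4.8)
The plan is to exploit the Wold decomposition, as signalled by the remark preceding the lemma. First I would recall that every isometry $V$ on a Hilbert space is unitarily equivalent to an orthogonal sum $W\oplus S$, where $W$ is unitary and $S$ is a unilateral shift of some multiplicity: concretely, $S$ acts on $\ell^2(\N,K)$ for a separable Hilbert space $K$ (the wandering subspace $H\ominus VH$) by $S(\xi_0,\xi_1,\xi_2,\dots)=(0,\xi_0,\xi_1,\dots)$. If the multiplicity is $0$, then $V$ is already unitary and there is nothing to prove; otherwise the whole problem reduces to approximating this shift summand strongly by unitaries on $\ell^2(\N,K)$, because taking the direct sum with the fixed unitary $W$ and conjugating by the fixed unitary implementing the equivalence are both continuous for the strong operator topology, and on $\II$ (where every operator has norm $1$) strong convergence coincides with convergence in the metric $d$ by dominated convergence in the defining series.

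Next I would produce the approximating unitaries for the shift explicitly. For $n\in\N$ let $U_n$ be the operator on $\ell^2(\N,K)$ that cyclically permutes the first $n+1$ coordinate copies of $K$, i.e. $U_n(\xi_0,\dots,\xi_n,\xi_{n+1},\dots)=(\xi_n,\xi_0,\dots,\xi_{n-1},\xi_{n+1},\xi_{n+2},\dots)$, and is the identity on the remaining coordinates; each $U_n$ is manifestly unitary. For a fixed $\xi=(\xi_k)_{k\ge0}\in\ell^2(\N,K)$ the vectors $U_n\xi$ and $S\xi$ agree in coordinates $1$ through $n$, and a direct computation bounds $\|U_n\xi-S\xi\|^2$ by a fixed constant times $\sum_{k\ge n}\|\xi_k\|^2$, which tends to $0$; hence $U_n\to S$ strongly. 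Transporting back, for an arbitrary $V=Q(W\oplus S)Q^*\in\II$ the operators $V_n:=Q(W\oplus U_n)Q^*$ lie in $\UU$ and converge strongly, hence in $d$, to $V$, which yields the density of $\UU$ in $\II$.

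The only step that is more than bookkeeping — the ``hard part'', such as it is — is the observation that the unilateral shift is a strong limit of unitary operators, together with the elementary norm estimate confirming it; the reduction via the Wold decomposition and the passage between strong convergence and the metric $d$ are routine.
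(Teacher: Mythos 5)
Your proof is correct and follows exactly the route the paper intends: the paper omits the argument, citing Eisner--Ser\'eny and noting only that the lemma is ``based on the Wold decomposition,'' which is precisely your reduction to approximating the unilateral shift summand strongly by cyclic permutations of coordinates. Your norm estimate $\|U_n\xi-S\xi\|^2\leq C\sum_{k\geq n}\|\xi_k\|^2$ checks out, so nothing is missing.
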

However, the following much stronger result holds.
\begin{thm}\label{thm:typical-isometry}
The set $\UU$ of unitary operators is residual in $\II$.
\end{thm}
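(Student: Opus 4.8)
The plan is to exhibit $\UU$ as a countable intersection of open dense subsets of the complete metric space $\II$, so that residuality follows from the Baire category theorem. The natural way to detect failure of unitarity of an isometry $T$ is to look at the deficiency, i.e.\ the orthogonal complement of the range. Fixing the dense set $\{x_j\}_{j=1}^\infty \subset H\setminus\{0\}$ used in the metric, I would set, for $m,n \in \N$,
\begin{equation*}
  \UU_{m,n} := \bigl\{ T \in \II : \text{there is } y \in \mathrm{lin}\{x_1,\dots,x_n\} \text{ with } \|Ty - x_m\| < \tfrac{1}{n} \text{ and } \|y\|\le n \bigr\},
\end{equation*}
or some variant of this which expresses that $x_m$ is approximately in the range of $T$ with controlled preimage. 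The key claims are then: (i) each $\UU_{m,n}$ is open in $\II$; (ii) each $\UU_{m,n}$ is dense in $\II$; and (iii) $\bigcap_{m,n} \UU_{m,n} = \UU$. Granting these, $\UU$ is residual.

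For (i), openness is the routine part: if $T \in \UU_{m,n}$ with witness $y$, then any $S$ close enough to $T$ in the metric $d$ satisfies $\|Sy - Ty\|$ small (since $y$ lies in the span of finitely many $x_j$, closeness in $d$ controls $\|(S-T)x_j\|$ for $j\le n$, hence $\|(S-T)y\|$ up to the bound $\|y\|\le n$), so the same $y$ works for $S$. For (iii), if $T$ lies in every $\UU_{m,n}$ then every $x_m$ is a limit of points of $\rg T$; since $\rg T$ is closed ($T$ an isometry) and $\{x_m\}$ is dense, $\rg T = H$, so $T$ is surjective and hence unitary; the converse inclusion $\UU \subseteq \bigcap \UU_{m,n}$ is immediate because a unitary is onto, so each $x_m$ has an exact preimage, which can be approximated by elements of $\mathrm{lin}\{x_1,\dots,x_n\}$.

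The main obstacle is (ii), density of each $\UU_{m,n}$ in $\II$, and this is exactly where the strengthening of Lemma~\ref{lemma:unitary-dense-isometry} is needed. Given an arbitrary isometry $T$ and $\varepsilon>0$, I must produce an isometry $S$ with $d(S,T)<\varepsilon$ such that $x_m$ is (almost) in $\rg S$ with a preimage of norm $\le n$. One route: Lemma~\ref{lemma:unitary-dense-isometry} already gives a unitary $V$ with $d(V,T)<\varepsilon$, and a unitary automatically lies in $\UU_{m,n}$ for all sufficiently large $n$ (with preimage $V^{-1}x_m$), but we need membership for the \emph{given} $n$ after perturbing within distance $\varepsilon$ while only altering $T$ on a finite-dimensional piece. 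So the real work is a local surgery: using the Wold decomposition $T = U \oplus (\text{unilateral shift})$, one modifies $T$ on a finite-dimensional subspace spanned by a few wandering vectors and a few of the $x_j$'s so as to route some small vector onto $x_m$, checking that the modification (a) remains an isometry and (b) changes $Tx_j$ only slightly for $j \le N$, where $N$ is chosen so that the tail $\sum_{j>N} 2^{-j}$ is negligible. Controlling the preimage norm bound $\|y\|\le n$ simultaneously with the error $\|Sy-x_m\|<1/n$ is the delicate point, since $y$ must be built from finitely many of the $x_j$'s; one likely needs to first enlarge $n$ implicitly by intersecting over a cofinal family, i.e.\ replace $\UU_{m,n}$ by $\UU_m := \bigcup_n \UU_{m,n}$-type open sets chosen so that density becomes a clean consequence of Lemma~\ref{lemma:unitary-dense-isometry} together with openness. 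I expect the cleanest writeup to define the open dense sets directly in terms of ``$\dist(x_m,\rg T)<1/n$'' and prove density by approximating $T$ by a unitary via Lemma~\ref{lemma:unitary-dense-isometry}.
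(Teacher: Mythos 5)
Your overall strategy --- writing $\UU$ as a countable intersection of open dense subsets of the complete metric space $(\II,d)$ indexed by the condition ``$x_m$ is within $1/n$ of $\rg T$'' and invoking Baire --- is exactly the paper's argument read in complement form: the paper writes $\II\setminus\UU=\bigcup_{j,k}M_{j,k}$ with $M_{j,k}=\{T:\dist(x_j,\rg T)>\frac1k\}$ and shows each $M_{j,k}$ is nowhere dense. Your steps (i) and (iii) are correct (closedness of $\rg T$ for an isometry is indeed the point in (iii)). The genuine gap is that you never prove (ii), and the difficulty you encounter there is self-inflicted: it comes from requiring the witness $y$ to lie in $\mathrm{lin}\{x_1,\dots,x_n\}$ with $\|y\|\le n$. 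Under that restriction a unitary $V$ close to $T$ need not belong to $\UU_{m,n}$ for the \emph{given} $n$ (the vectors $x_1,\dots,x_n$ may approximate $V^{-1}x_m$ arbitrarily badly), which is why you are driven to an unexecuted ``local surgery'' on the Wold decomposition. None of that machinery is needed.

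The repair is precisely your last sentence. Define $\UU_{m,n}:=\{T\in\II:\dist(x_m,\rg T)<\frac1n\}$ with no constraint on the preimage. Density is then immediate: every unitary $V$ is surjective, so $\dist(x_m,\rg V)=0$ and hence $\UU\subset\UU_{m,n}$; since $\UU$ is dense in $\II$ by Lemma~\ref{lemma:unitary-dense-isometry}, each $\UU_{m,n}$ is dense. Openness survives the removal of the finite-span restriction: for fixed $y$ the map $T\mapsto Ty$ is $d$-continuous on $\II$, because one can pick $x_{j_0}$ with $\|y-x_{j_0}\|$ small and use $\|(S-T)y\|\le\|(S-T)x_{j_0}\|+2\|y-x_{j_0}\|$ (all operators involved are isometries); so if $\|T_0y_0-x_m\|<\frac1n$, the same witness $y_0$ works for every $S$ sufficiently $d$-close to $T_0$. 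In fact the paper gets away with even less than openness: it only verifies $\UU\cap\overline{M_{j,k}}=\emptyset$, which together with density of $\UU$ already forces $\overline{M_{j,k}}$ to have empty interior. With this modification your argument closes and coincides with the paper's proof.
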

\begin{proof}
Let $T$ be a non-invertible isometry. Then $\rg T$ is closed and
different from $H$. Therefore, there exists $x_j$ with $\dist(x_j, \rg
T)>0$, hence 
\begin{equation*}
 \II\setminus \UU= \bigcup_{k,j=1}^\infty M_{j,k} \quad \text{ with }\quad M_{i,k}:=\left\{T: \dist(x_j,\rg T)>\frac{1}{k}\right\}.
\end{equation*}

We prove now that every set $M_{j,k}$ is nowhere dense in $\II$. By Lemma \ref{lemma:unitary-dense-isometry} it suffices to show that
\begin{equation}\label{eq:nowhere-dense}
\UU\cap \overline{M_{j,k}}=\emptyset \quad \forall j,k. 
\end{equation}
Assume the contrary, i.e., that there exists a sequence
$\{T_n\}_{n=1}^\infty\subset M_{j,k}$ for some $j,k$ and a unitary
operator $U$ with $\lim_{n\to\infty}T_n=U$ strongly. In particular,
$\lim_{n\to\infty} T_n y=Uy=x_j$ for $y:=U^{-1}x_j$. This however
implies $\lim_{n\to \infty}\dist(x_j, \rg T_n)=0$, a contradiction. So
(\ref{eq:nowhere-dense}) is proved, every set $M_{j,k}$ is nowhere dense, and $\UU$ is residual in $\II$.
\end{proof}

  \vspace{0.1cm}

\section{Contractions}

As before, we consider a separable infinite-dimensionals Hilbert space $H$
and prove now that a ``typical'' contraction on $H$ is unitary. 

Denote by $\CC$ the set of all contractions on $H$ endowed with the weak operator topology. This is a complete metric space with respect to the metric 
\begin{equation*}
d(T,S):= \sum_{i,j=1}^\infty \frac{|\left<Tx_i,x_j\right> -\left< Sx_i,x_j\right>|}{2^{i+j} \|x_i\| \|x_j\|}\quad \text{for }\ T,S \in \mathcal{C},
\end{equation*}
where $\{x_j\}_{j=1}^\infty$ is a fixed dense subset of $H\setminus \{0\}$. 

The following density result holds for unitary operators, see Peller \cite{peller:1981} (for a much stronger result) or Eisner, Ser\'eny \cite{eisner/sereny:2008}.   
\begin{lemma}\label{lemma:unitary-dense-contr}
The set $\UU$ of unitary operators is dense in $\CC$. 
\end{lemma}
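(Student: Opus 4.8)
The plan is to use that the weak operator topology on the unit ball is generated by the finitely-many-vector seminorms, so that, concretely via the metric $d$, it suffices to show the following: given a contraction $T$, an $\varepsilon>0$ and an $N\in\N$, there is a unitary $U$ with $\langle Ux_i,x_j\rangle=\langle Tx_i,x_j\rangle$ for all $i,j\le N$. Indeed, since $|\langle Sx_i,x_j\rangle|\le\|x_i\|\|x_j\|$ for every contraction $S$, the part of the series defining $d(U,T)$ coming from indices $i>N$ or $j>N$ is at most $2\sum_{i+j>N}2^{-(i+j)}$, which is $<\varepsilon$ once $N$ is chosen large, and the remaining finitely many terms vanish. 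Thus everything reduces to a compression problem on the finite-dimensional space $E:=\mathrm{span}\{x_1,\dots,x_N\}$: with $P_E$ the orthogonal projection onto $E$, it suffices to find a unitary $U$ on $H$ whose compression $P_EU|_E$ coincides with $A:=P_ET|_E\colon E\to E$, since for $f,g\in E$ one has $\langle Tf,g\rangle=\langle P_ETf,g\rangle=\langle Af,g\rangle$, and likewise for $U$.

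The main step is then to realize this finite compression by a genuine unitary dilation and to hide the extra dimensions inside $H$ — this is exactly where infinite-dimensionality is used. The operator $A$ is a contraction of the finite-dimensional space $E$, so I would form its Halmos dilation $V:=\begin{pmatrix}A & (I-AA^*)^{1/2}\\ (I-A^*A)^{1/2} & -A^*\end{pmatrix}$ on $E\oplus E$; using the intertwining identity $A^*f(AA^*)=f(A^*A)A^*$ one checks $V^*V=VV^*=I$, and by construction the compression of $V$ to the first summand is $A$. Since $\dim H=\infty$, the orthogonal complement $E^\perp$ contains a subspace $E'$ together with a unitary $\psi\colon E'\to E$. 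Transporting $V$ along $\mathrm{id}_E\oplus\psi$ yields a unitary $U_0$ on $E\oplus E'\subset H$ with $U_0f=Af\oplus\psi^{-1}\big((I-A^*A)^{1/2}f\big)$ for $f\in E$. Finally I set $U:=U_0\oplus I$ with respect to the fixed decomposition $H=(E\oplus E')\oplus(E\oplus E')^\perp$; this $U$ is unitary, and for $f\in E$ we get $P_EUf=P_EU_0f=Af$, hence $\langle Ux_i,x_j\rangle=\langle Tx_i,x_j\rangle$ for all $i,j\le N$. Therefore $d(U,T)<\varepsilon$, and $\UU$ is dense in $\CC$.

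The only thing that needs care is the bookkeeping: one must keep the copy of $E$ on which the compression is read off fixed under the identification $\mathrm{id}_E\oplus\psi$, and note that extending by the identity on the infinite-dimensional complement $(E\oplus E')^\perp$ genuinely produces a unitary — both are immediate once the orthogonal decomposition of $H$ is fixed at the start. There is no analytic obstacle here; the essential (and only) use of $\dim H=\infty$ is that there is room for the ``doubled'' space $E\oplus E'$ inside $H$, which is why this density statement is much softer than Peller's quantitative strengthening. Alternatively, one may simply invoke Peller \cite{peller:1981} or Eisner--Ser\'eny \cite{eisner/sereny:2008}, as the excerpt does.
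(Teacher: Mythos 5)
Your argument is correct and complete; note that the paper itself offers no proof of Lemma~\ref{lemma:unitary-dense-contr}, only a citation to Peller and to Eisner--Ser\'eny, so what you have written is a self-contained substitute rather than a variant of an argument in the text. Your route is the classical one for showing that the unitary group is weakly dense in the closed unit ball of the bounded operators when $\dim H=\infty$: the tail bound $|\langle Tx_i,x_j\rangle-\langle Ux_i,x_j\rangle|\le 2\|x_i\|\|x_j\|$ reduces the metric estimate to finitely many matrix entries, matching those entries reduces to prescribing the compression $A=P_ET|_E$ on the finite-dimensional space $E$, and the Halmos dilation realizes $A$ as the corner of a unitary on $E\oplus E$, which fits inside $H$ precisely because $E^\perp$ is infinite-dimensional. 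The individual steps all check out: the intertwining $A^*(I-AA^*)^{1/2}=(I-A^*A)^{1/2}A^*$ does give $V^*V=VV^*=I$; the compression of $U$ to $E$ equals $A$ exactly because the second component of $U_0f$ lies in $E'\subset E^\perp$; and extending by the identity on $(E\oplus E')^\perp$ preserves unitarity. Two small comments. First, your construction gives exact agreement of the chosen matrix entries, which is more than needed but costs nothing. Second, the comparison with the cited sources: your argument yields only density for the weak operator topology, whereas Peller's theorem gives density for the power-weak topology (simultaneous weak approximation of all powers); your single Halmos dilation cannot give that, since it is not a power dilation --- one would instead embed the full Sz.-Nagy dilation space $\bigoplus_{n\in\Z}E$ into $H$. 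For the use made of the lemma in Theorem~\ref{thm:typical-contr}, plain weak density is all that is required, so your proof is an adequate replacement.
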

Our construction uses the following well-known property of weak
convergence, see e.g. Halmos \cite{halmos:1967}. For the reader's
convenience we give its simple proof.
\begin{lemma}\label{w-imply-st}
Let $\{T_n\}_{n=1}^\infty$ be a sequence of linear operators on a Hilbert space $H$ converging weakly to a linear operator $S$. If $\|T_n x\|\leq \|Sx\|$ for every $x\in H$, then $\displaystyle \lim_{n\to\infty}T_n=S$ strongly.
\end{lemma}
\begin{proof}
For every $x\in H$ we have
\begin{eqnarray*}
\Vert T_n x - Sx \Vert^2 
 &=& \left< T_n x - Sx, T_n x - Sx \right> = \Vert Sx \Vert^2 + \Vert T_n x \Vert^2  - 2 \Re \left< T_n  x, S x \right> \\ 
 &\leq& 2 \left< Sx,Sx \right>  - 2\Re\left< T_n x, Sx \right> = 2 \Re \left< (S - T_n) x, Sx \right> \to 0 \ \text{  as  } \ n\to\infty, 
\end{eqnarray*}
and the lemma is proved.
\end{proof}

We now state the main result of this paper. 
\begin{thm}\label{thm:typical-contr}
The set $\UU$ of unitary operators is residual in the set $\CC$ of contractions. 
\end{thm}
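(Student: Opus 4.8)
The plan is to follow the proof of Theorem~\ref{thm:typical-isometry}, the new ingredient being that a contraction can fail to be unitary in two possible ways. Since $T,T^*\in\CC$ are both contractions, $T$ is unitary if and only if $\|Tx\|=\|x\|$ \emph{and} $\|T^*x\|=\|x\|$ for all $x\in H$ (equivalently $T^*T=TT^*=I$); so a non-unitary $T\in\CC$ satisfies $\|Tx_0\|<\|x_0\|$ for some $x_0\neq0$ or $\|T^*x_0\|<\|x_0\|$ for some $x_0\neq0$. The maps $x\mapsto\|x\|^2-\|Tx\|^2$ and $x\mapsto\|x\|^2-\|T^*x\|^2$ are norm-continuous and $\{x_j\}$ is dense in $H\setminus\{0\}$, so such a strict deficiency already occurs at some $x_j$ with a quantitative gap $\tfrac1k$. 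I would therefore write
\begin{equation*}
\CC\setminus\UU=\bigcup_{j,k=1}^\infty M_{j,k},\qquad M_{j,k}:=\Bigl\{T\in\CC:\ \|Tx_j\|^2\le\|x_j\|^2-\tfrac1k\ \ \text{or}\ \ \|T^*x_j\|^2\le\|x_j\|^2-\tfrac1k\Bigr\},
\end{equation*}
and reduce the theorem to showing that each $M_{j,k}$ is nowhere dense in $\CC$.

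For that step, by Lemma~\ref{lemma:unitary-dense-contr} it is enough to check $\UU\cap\overline{M_{j,k}}=\emptyset$. Here I would argue by contradiction: if $\{T_n\}\subset M_{j,k}$ converges in the weak operator topology to a unitary $U$, then $T_n^*\to U^*$ weakly as well, and since $\|T_nx\|\le\|x\|=\|Ux\|$ for every $x$, Lemma~\ref{w-imply-st} upgrades this to $T_n\to U$ strongly; applying the same lemma to the adjoints ($\|T_n^*x\|\le\|x\|=\|U^*x\|$) gives $T_n^*\to U^*$ strongly too. Then $\|T_nx_j\|\to\|x_j\|$ and $\|T_n^*x_j\|\to\|x_j\|$, so for all large $n$ neither defining inequality of $M_{j,k}$ can hold --- contradicting $T_n\in M_{j,k}$. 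Hence $\overline{M_{j,k}}$ avoids the dense set $\UU$, so $M_{j,k}$ is nowhere dense.

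Putting these together, $\CC\setminus\UU$ is a countable union of nowhere dense sets, hence of first category, so $\UU$ is residual in $\CC$. I expect the main obstacle to be the second (``co-isometry'') alternative in $M_{j,k}$, which is exactly what distinguishes this from the isometry case: one must pass to adjoints, which is legitimate because weak operator convergence is preserved by $T\mapsto T^*$ and Lemma~\ref{w-imply-st} applies equally to $\{T_n^*\}$. The reduction ``deficiency at some $x_0$ $\Rightarrow$ deficiency at some $x_j$'' is a routine continuity-and-density verification that I would dispatch quickly.
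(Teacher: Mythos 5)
Your proof is correct, and it reaches the conclusion by a genuinely different decomposition than the paper, even though the two key ingredients --- the density of $\UU$ in $\CC$ (Lemma~\ref{lemma:unitary-dense-contr}) and the weak-to-strong upgrade for contractions converging to a unitary (Lemma~\ref{w-imply-st}) --- are used in exactly the same way. The paper splits $\CC\setminus\UU$ into two pieces, $(\II\setminus\UU)\cup(\CC\setminus\II)$: the non-isometric contractions are caught by sets $N_{j,k}=\{T:\|Tx_j\|/\|x_j\|<1-\tfrac1k\}$ (essentially your first alternative), while the non-surjective isometries are caught by sets $\{T\ \text{isometric}:\dist(x_j,\rg T)>\tfrac1k\}$, which exploits that an isometry has closed range. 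You instead encode surjectivity through the adjoint, using that $T\in\CC$ is unitary iff both $T$ and $T^*$ are isometric, and fold everything into a single family of sets. This buys a more symmetric argument and avoids any discussion of ranges (which would be delicate for a general contraction, whose range need not be closed --- the paper sidesteps this by restricting its $M_{j,k}$ to isometries, you by never mentioning ranges at all); the price is the extra observation that $T\mapsto T^*$ is weakly continuous and that Lemma~\ref{w-imply-st} must be applied a second time to the adjoints, both of which you justify correctly. Your reduction from ``deficiency at some $x_0$'' to ``quantitative deficiency at some $x_j$'' is the same routine continuity-and-density step the paper uses implicitly, and it is fine as stated.
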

\begin{proof}
We first prove that the set $\II\setminus \UU$ of non-invertible isometries is of first category. 
As in the proof of Theorem \ref{thm:typical-isometry},
$\II\setminus\UU$ is given as  
\begin{equation*}
 \II\setminus \UU= \bigcup_{k,j=1}^\infty M_{j,k} \quad \text{ with }\quad M_{i,k}:=\left\{T \text{ isometric}: \dist(x_j,\rg T)>\frac{1}{k}\right\}.
\end{equation*}
By Lemma \ref{lemma:unitary-dense-contr} it is enough to show that 
\begin{equation*}
\UU\cap \overline{M_{j,k}}=\emptyset \quad \forall j,k. 
\end{equation*}
Assume that for some $j,k$ there exists a sequence
$\{T_n\}_{n=1}^\infty\subset M_{j,k}$ converging weakly to a unitary
operator $U$. Then, by Lemma \ref{w-imply-st}, $T_n$ converges to $U$
strongly. As in the proof of  Theorem \ref{thm:typical-isometry}, this
implies $\lim_{n\to\infty}T_ny=Uy=x_j$ for $y:=U^{-1}x_j$. Hence
$\lim_{n\to\infty}\dist(x_j,\rg T_n)=0$  contradicting
$\{T_n\}_{n=1}^\infty\subset M_{j,k}$, so every $M_{j,k}$ is nowhere
dense and $\II\setminus \UU$ is of first category.

We now show that the set of non-isometric operators is of first category in $\CC$ as well. 
Let $T$ be a non-isometric contraction. Then there exists $x_j$ such
that $\|Tx_j\|<\|x_j\|$, hence
\begin{equation*}
 \CC\setminus \II= \bigcup_{k,j=1}^\infty N_{j,k} \quad \text{ with }\quad N_{j,k}:=\left\{T: \frac{\|Tx_j\|}{\|x_j\|}<1-\frac{1}{k}\right\}.
\end{equation*} 
It remains to show that every $N_{j,k}$ is nowhere dense in $\CC$. By
Lemma \ref{lemma:unitary-dense-contr} again it suffices to show 
\begin{equation*}
\UU\cap \overline{N_{j,k}}=\emptyset \quad \forall j,k. 
\end{equation*}
Assume that for some $j,k$ there exists a sequence
$\{T_n\}_{n=1}^\infty\subset N_{j,k}$ converging weakly to a unitary
operator $U$. Then $T_n$ converges to $U$ strongly by Lemma
\ref{w-imply-st}. This implies in particular that
$\lim_{n\to\infty}\|T_n x_j\|=\|Ux_j\|=\|x_j\|$ contradicting
$\frac{\|T_nx_j\|}{\|x_j\|}<1-\frac{1}{k}$ for every $n\in \N$.
\end{proof}


\begin{remark}
Peller \cite{peller:1981} showed that the set of unitary operators is
dense in the set of contractions with respect to the so-called
pw-topology (power-weak operator topology). This topology corresponds
to weak convergence of all powers, i.e.,
$$
\displaystyle
\text{pw-lim}\ T_n=S \quad 
\Longleftrightarrow
\quad
\lim_{n\to\infty}T_n^k=S^k \text{ weakly for all } k\in\N.
$$

It is natural to ask whether unitary operators are residual 
with respect to this topology as well. Indeed, all the arguments used in the proof of Theorem
\ref{thm:typical-contr} remain valid for this topology, hence  the complement
of $\UU$ is of first category. However, the space of all contractions
is no longer complete, see Eisner, Ser\'eny \cite{eisner/sereny:2008-tk}
for a diverging Cauchy sequence. So it is not clear whether $\CC$ with
the pw-topology is a Baire space. 
\end{remark}

 \vspace{0.1cm}

\section{Application to the embedding problem}

In this section we consider the following problem: Which bounded
operators $T$ can be \emph{embedded} into a strongly continuous
semigroup, i.e., does there exist a $C_0$-semigroup $(T(t))_{t\geq 0}$
with $T=T(1)$? (For the basic theory of $C_0$-semigroups we refer to Engel, Nagel \cite{engel/nagel:2000}.)
For some classes of operators this question has a positive answer, e.g., for operators with spectrum in a
certain area using functional calculus, see e.g. Haase \cite[Section 3.1]{haase:2006}, and for isometries on Hilbert spaces with infinite-dimensional kernel, see \cite{eisner:2008}.

We apply the above category results to this
problem and show that a ``typical'' isometry and a ``typical''
contraction on a separable infinite-dimensional Hilbert space is embeddable. 

It is well-known that unitary operators have the embedding property.
%
\begin{lemma}
Every unitary operator on a Hilbert space can be embedded into a unitary $C_0$-group.
\end{lemma}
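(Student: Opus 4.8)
The plan is to build the group directly from the spectral theorem via the Borel functional calculus. Let $\mathbb{T}=\{z\in\C:|z|=1\}$. Given the unitary operator $U$ on $H$, first write its spectral resolution $U=\int_{\mathbb{T}}z\,dE(z)$, where $E$ is the projection-valued spectral measure of $U$, supported on $\sigma(U)\subseteq\mathbb{T}$. Next, fix a Borel branch of the argument, i.e.\ the bounded Borel function $\varphi\colon\mathbb{T}\to[0,2\pi)$ determined by $e^{i\varphi(z)}=z$, and set $A:=\varphi(U)=\int_{\mathbb{T}}\varphi(z)\,dE(z)$. Since $\varphi$ is real-valued and bounded, $A$ is a bounded self-adjoint operator on $H$.

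Now define $U(t):=e^{itA}=\int_{\mathbb{T}}e^{it\varphi(z)}\,dE(z)$ for $t\in\R$. By the functional calculus $(U(t))_{t\in\R}$ is a one-parameter group of unitary operators; as $A$ is bounded, $t\mapsto U(t)$ is even norm-continuous, in particular strongly continuous, so it is a unitary $C_0$-group (and restricting to $t\ge 0$ gives a $C_0$-semigroup if one prefers). Finally, $e^{i\varphi(z)}=z$ yields $U(1)=\int_{\mathbb{T}}e^{i\varphi(z)}\,dE(z)=\int_{\mathbb{T}}z\,dE(z)=U$, which is the desired embedding. (Equivalently, one may cite the converse half of Stone's theorem: the self-adjoint operator $A$ generates a strongly continuous unitary group whose time-one map is $U$.)

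The only point that needs a word of care is that there is no \emph{continuous} branch of the logarithm on all of $\mathbb{T}$, so the continuous functional calculus does not suffice when $\sigma(U)=\mathbb{T}$; this is exactly why the construction is carried out with the Borel functional calculus. If one wishes to avoid invoking it abstractly, one can instead use the multiplicity form of the spectral theorem to realize $U$ unitarily as multiplication by $z$ on some $L^2(\mathbb{T},\mu)$, and then take $A$ to be multiplication by $\varphi$ and $U(t)$ multiplication by $e^{it\varphi}$; the verification is identical. Apart from this, every step is routine.
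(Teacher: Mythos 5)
Your proof is correct and follows essentially the same route as the paper: both take a Borel branch of the argument on the unit circle and exponentiate, the paper phrasing this via the multiplication-operator form of the spectral theorem (multiplication by $e^{i\varphi}$ embedded into multiplication by $e^{it\varphi}$) and you via the abstract Borel functional calculus, with the multiplication version noted as an equivalent alternative. Your explicit remark that a \emph{continuous} logarithm need not exist on all of the circle, so that the Borel calculus is genuinely needed, is a worthwhile point of care that the paper leaves implicit.
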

\begin{proof}
Let $T$ be a unitary operator on a Hilbert
space $H$. Then by the spectral theorem, see
e.g. Halmos \cite{halmos:1963}, $T$ is isomorphic to a direct sum of
multiplication operators $M$ given
by $Mf(e^{i\varphi}):=e^{i\varphi}f(e^{i\varphi})$
on $L^2(\Gamma,\mu)$ for the unit circle $\Gamma$ and some
measure $\mu$. Each such operator can be embedded into the unitary $C_0$-group $(U(t))_{t\in\R}$ given by
$$
 U(t)f(e^{i\varphi}):=e^{it\varphi}f(e^{i\varphi}), \quad \varphi\in
 [0,2\pi],\ t\in\R. 
$$
\end{proof}
A direct corollary of Theorem \ref{thm:typical-isometry}, Theorem
\ref{thm:typical-contr} and the above lemma is the following category
result for embeddable operators. 
\begin{thm}
On a separable infinite-dimensional Hilbert space, the set 
of all embeddable operators is residual in the set $\II$ of all
isometries (for the strong operator topology) and in the set $\CC$ of all contractions (for the weak operator topology). 
\end{thm}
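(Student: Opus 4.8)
The plan is simply to combine the three preceding results, since the statement is advertised as a direct corollary. First I would recall what the two category theorems give: by Theorem~\ref{thm:typical-isometry} the set $\UU$ of unitary operators is residual in $\II$ for the strong operator topology, and by Theorem~\ref{thm:typical-contr} the set $\UU$ is residual in $\CC$ for the weak operator topology. In both cases this means precisely that the complement of $\UU$ (taken inside $\II$, respectively inside $\CC$) is of first category, and that the ambient space is a Baire space, both $\II$ and $\CC$ being complete metric spaces in the metrics exhibited above, so that ``residual'' carries its intended meaning.

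Next I would invoke the lemma just proved: every unitary operator $T$ on $H$ embeds into a unitary $C_0$-group $(U(t))_{t\in\R}$ with $U(1)=T$, hence in particular every unitary operator is embeddable. Writing $E$ for the set of embeddable operators, this says $\UU\subseteq E$, both as a subset of $\II$ and as a subset of $\CC$.

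Finally I would use the elementary fact that any superset of a residual set is residual: if $\UU\subseteq E$ and the complement of $\UU$ is of first category, then the complement of $E$ is contained in the complement of $\UU$, hence is of first category as well. Applying this with $E$ sitting inside $\II$ (strong topology) and inside $\CC$ (weak topology) yields both assertions. I do not expect a real obstacle here; the only points worth making explicit are the Baire property of $\II$ and $\CC$ and the monotonicity of residuality under inclusion. One could also note in passing that the argument in fact shows slightly more, namely that a typical isometry and a typical contraction embeds into a $C_0$-\emph{group}, not merely a $C_0$-semigroup.
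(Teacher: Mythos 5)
Your proposal is correct and matches the paper's argument exactly: the paper also obtains this as a direct corollary of Theorems~\ref{thm:typical-isometry} and \ref{thm:typical-contr} together with the lemma that every unitary operator embeds into a unitary $C_0$-group, using that a superset of a residual set is residual. No issues.
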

In other words, a ``typical'' isometry and a ``typical'' contraction can be embedded
into a $C_0$-semigroup. 
 This is an operator theoretic counterpart
to a recent result of de
la Rue, de Sam Lazaro \cite{delarue/desamlazaro:2003} in ergodic theory stating that a
``typical'' measure preserving transformation can be embedded into a
continuous measure preserving flow. 

\begin{remark}
In particular, a ``typical'' contraction (on a separable infinite-dimensional Hilbert
space) has roots of every order. This is an operator theoretic analogue
of a result of King \cite{king:2000} from ergodic
theory. See also Ageev \cite{ageev:2003} and Stepin, Eremenko \cite{stepin/eremenko:2004} for related results. 
\end{remark}


\vspace{0.15cm}

\noindent {\bf Acknowledgement.} 
The author is grateful to Rainer Nagel for valuable comments.

\parindent0pt

\end{document}